\numberwithin{equation}{section}
\newtheorem{thm}{Theorem}[section]
\newtheorem{cor}[thm]{Corollary}
\newtheorem{lem}[thm]{Lemma}
\def\Diff{\mathop{\rm Diff}\nolimits}
\def\GL{\mathop{\rm GL}\nolimits}
\def\O{\mathop{\rm O}\nolimits}
\def\Id{\mathop{\rm Id}\nolimits}
\def\det{\mathop{\rm det}\nolimits}
\def\Tr{\mathop{\rm Tr}\nolimits}
\newcommand{\ra}{\rightarrow}
\newcommand{\fl}{\forall}
\newcommand{\wt}{\widetilde}
\newcommand{\wg}{\wedge}
\newcommand{\s}{\sigma}
\newcommand{\D}{\Delta}
\newcommand{\Zb}{\mathbb{Z}}
\newcommand{\ot}{\otimes}
\newcommand{\Hc}{\mathcal{H}}
\newcommand{\g}{\gamma}
\newcommand{\vp}{\varphi}
\newcommand{\ve}{\varepsilon}
\newcommand{\Cb}{\mathbb{C}}
\newcommand{\FA}{\mathfrak{A}}
\newcommand{\Fa}{\mathfrak{a}}
\newcommand{\Fg}{\mathfrak{g}}
\newcommand{\Fh}{\mathfrak{h}}
\newcommand{\Fl}{\mathfrak{l}}
\newcommand{\p}{\partial}
\def\tb{{\bf t}}
\def\Gb{{\bf G}}
\def\0b{{\bf 0}}
\def\Cb{{\mathbb C}}
\def\Nb{{\mathbb N}}
\def\Rb{{\mathbb R}}
\def\Zb{{\mathbb Z}}
\def\Ac{{\mathcal A}}
\def\Bc{{\mathcal B}}
\def\Cc{{\mathcal C}}
\def\Dc{{\mathcal D}}
\def\Fc{{\mathcal F}}
\def\Gc{{\mathcal G}}
\def\Hc{{\mathcal H}}
\def\Ic{{\mathcal I}}
\def\Qc{{\mathcal Q}}
\def\Uc{{\mathcal U}}
\def\Xc{{\mathcal X}}
\def\a{\alpha}
\def\d{\delta}
\def\lb{\lambda}
\def\g{\gamma}
\def\om{\omega}
\def\s{\sigma}
\def\t{\theta}
\def\ve{\varepsilon}
\def\vp{\varphi}
\def\vp{\varphi}
\def\D{\Delta}
\def\G{\Gamma}
\def\Om{\Omega}
\def\fl{\forall}
\def\ify{\infty}
\def\nb{\nabla}
\def\ot{\otimes}
\def\part{\partial}
\def\ts{\times}
\def\wdg{\wedge}
\def\ra{\rightarrow}
\def\text{\hbox}
\def\Diff{\mathop{\rm Diff}\nolimits}
\def\fl{\forall}
\def\ify{\infty}
\def\nb{\nabla}
\def\ot{\otimes}
\def\ra{\rightarrow}
\def\p{\partial}
\def\wt{\widetilde}
\def\td{\tilde}
\def\0D{\Delta^{(0)}}
\def\1D{\Delta^{(1)}}
\def\dbo{{\bf d}}
\def\tb{{\bf t}}
\def\Gb{{\bf G}}
\def\0b{{\bf 0}}
\def\build#1_#2^#3{\mathrel{
\mathop{\kern 0pt#1}\limits_{#2}^{#3}}}
\def\one{{\bf 1}}
\def\one{{\bf 1}}
\def\0D{\Delta^{(0)}}
\def\1D{\Delta^{(1)}}
\title[Equivariant Chern classes]{Equivariant Chern classes in Hopf cyclic
cohomology}
\begin{document}
%%%%%%%%%%%%%%%%%%%%%%%%%%%%%%%%%%%%%%%%%%%%%%%
\author{Henri Moscovici}
 \address{Department of mathematics,  
The Ohio State University, 
Columbus, OH 43210, USA}
\email{henri@math.osu.edu}
 
\thanks{Supported in part by the NSF
award DMS-1300548 and by the CNCS-UEFISCDI project PN-II-ID-PCE-2012-4-0201} 

%%%%%%%%%%%%%%%%%%%%%%%%%%%%%%%%%%%

% \dedicatory{Dedicated to Vasile Brinzanescu, with appreciation and friendship,
% on the occasion of his $70$th birthday}
 
%%%%%%%%%%%%%%%%%%%%%%%%%%%%%%%%%%%
\begin{abstract}

We present a geometric approach, in the spirit of the Chern-Weil theory, 
for constructing cocycles representing the classes of the
 Hopf cyclic cohomology of the Hopf algebra H(n) relative to GL(n, R). 
This provides an explicit description of the universal Hopf cyclic Chern classes, 
which complements our earlier geometric realization of the Hopf cyclic characteristic 
classes of foliations.
  \end{abstract}

 \maketitle

 \section*{Introduction}
 
 \smallskip
 
The Hopf algebra $\Hc_n$ originated in the investigation of the local index
formula for transversely hypoelliptic operators on foliations \cite{CM98}, 
performing the role of a `quantum structure group' for foliations of codimension
$n$. Its Hopf cyclic cohomology relative to $\O_n$
was shown to deliver the Gelfand-Fuks cohomology classes as
characteristic classes of `spaces of leaves'. 
 In~\cite{DHC} we presented a geometric method for 
explicitly constructing these universal Hopf cyclic cohomology classes
by means of concrete cocycles, in the spirit of the Chern-Weil theory. We
now supplement that construction by adapting the procedure to the case
of the Hopf cyclic cohomology of  $\Hc_n$  relative to $\GL_n =  \GL_n (\Rb)$, 
which corresponds to the universal equivariant Chern classes.
The essential modification needed to adjust the approach in~\cite{DHC} consists
in the replacement of the `differentiable' variants 
of the standard de Rham complexes for equivariant cohomology  
by a more restrictive version, to be called `regular differentiable'. 
\smallskip

As we often
defer to~\cite{DHC} for additional details, in order to facilitate the reading 
of the present paper we keep the exposition closely parallel to the former.
 In \S \ref{ExpCoc} we introduce the regular differentiable de Rham cohomology 
complexes and use them to 
prove an analogue relative to $\GL_n$ of the van Est-Haefliger isomorphism.
The construction proper of a basis of representative cocycles for 
the Hopf cyclic cohomology of $\Hc_n$ relative to $\GL_n$ is carried out
in \S \ref{UCC}. This
provides a complete description of the universal Hopf cyclic Chern classes, which
complements the geometric realization of the Hopf cyclic characteristic classes
of foliations~\cite{DHC}. Partial representations of these classes 
were obtained earlier by purely algebraic methods
in~\cite[\S 3.4.1]{MR09} (for Hochschild cohomology) and~\cite[\S 4.3]{MR11}.

  \tableofcontents
  
\section{Chern cocycles in regular differentiable cohomology} \label{ExpCoc}

\subsection{Regular differentiable de Rham complexes} \label{EqComp}

Given a manifold $M$ we denote by $\Gb$ 
is the group of diffeomorphisms $\Diff (M)$ 
equipped with the discrete topology, and by
$\triangle_{\Gb} M$
the simplicial manifold $ \{\triangle_{\Gb} M[p] := 
 \Gb^p \ts M\}_{p\geq 0}$ with its usual
face maps $\p_i : \triangle_{\Gb} M[p] \ra \triangle_{\Gb} M[p-1]$,  \, $ 1 \leq i \leq p$, \,   and degeneracies $\,\s_i: \triangle_{\Gb} M[p] \ra \triangle_{\Gb} M[p+1]$,  $\, 0 \leq i \leq p$.
The equivariant cohomology $H_\Gb (M, \Rb)$ can be computed as the cohomology of
the Bott bicomplex (cf. \cite{Bott*, BSS}) 
$\{C^\bullet \left(\Gb, \Om^\bullet (M)\right), \d, d \}$, endowed with the de Rham differential 
$d$ and with the group cohomology boundary  $\d$ 
 \begin{align*} 
 \begin{split}
 \d c (\phi_1, \ldots , \phi_{p+1})\,  = & \sum_{i=0}^{p} (-1)^i  
 c \big(\p_i (\phi_1, \ldots , \phi_{p+1})\big) \\
 & + (-1)^{p+1} 
 \phi_{p+1}^* c (\phi_1, \ldots , \phi_p) .
 \end{split}
 \end{align*}  
For our purposes it will be convenient to work with the homogeneous version of
this bicomplex,
$\{\bar{C}^\bullet \left(\Gb, \Om^\bullet (M)\right), \bar{\d}, d \}$, whose
$(p, q)$-cochains 
$\, \bar{c} (\rho_0 , \ldots , \rho_{p}) \in \Om^q (M)$, $\, \rho_0 , \ldots , \rho_{p} \in \Gb$
satisfy the covariance condition 
\begin{align} \label{cov1}
(\rho^{-1})^* \big(\bar{c} (\rho_0 \rho, \ldots , \rho_{p} \rho)\big) = \bar{c} (\rho_0, \ldots , \rho_p) ,
\quad \forall \, \rho, \rho_i \in \Gb ;
\end{align}
the group cohomology boundary is given by
 \begin{equation*} 
 \bar{\d} \bar{c} (\rho_0, \ldots , \rho_{p}) =
  \sum_{i=0}^{p} (-1)^i \bar{c} (\rho_0, \ldots , \check{\rho_i},
 \ldots , \rho_{p}) , 
\end{equation*} 
where the `check' mark signifies omission of the element.

The two bicomplexes are isomorphic via the identifications
 \begin{align} \label{xchng}
 \begin{split}
  &c (\phi_1, \ldots , \phi_p) = \bar{c} (\phi_1 \cdots \phi_p ,\,  \phi_2 \cdots \phi_p ,\,
  \ldots , \phi_p, e) \, , \\
  \text{resp.} \quad 
 &\bar{c}(\rho_0, \ldots , \rho_p) = \rho_p^* c(\rho_0 \rho_1^{-1}, \rho_1 \rho_2^{-1} ,
 \ldots , \rho_{p-1} \rho_p^{-1}) .
 \end{split}
 \end{align}
 
\smallskip

 Dupont's~\cite{Dupont}
de Rham complex of compatible forms $\{\Om^\bullet (|\triangle_{\Gb} M|), d \}$
on the geometric realization $|\triangle_{\Gb} M| = \prod_{p=0}^\ify \D^p \ts \triangle_{\Gb} M[p]$ 
provides an alternative way
of computing $H^\bullet_{\Gb} (M, \Rb)$.
By definition, such a form 
 consists of sequences $\om = \{\om_p\}_{p\geq 0}$, with
 $\om_p \in \Om^\bullet( \D^p \ts \triangle_{\Gb} M[p])$, such that for all morphisms
 $\mu \in \D(p, q)$ in the simplicial category, 
 \begin{align} \label{sform}
 (\mu_\bullet \ts \Id)^\ast \om_q \, = \, 
 (\Id \ts \mu^\bullet )^\ast \om_p \,\in  \Om^\bullet \left(\D^p \ts \triangle_{\Gb} M[q]\right) ;
\end{align}
here \, $
\D^p = \{\tb = (t_0, \ldots , t_p) \in \Rb^{p+1} \, \mid \, t_i \geq 0, \quad t_0 + \ldots + t_p =1\} $,
$\mu_\bullet  : \D^p \ra \D^q$, resp. $\mu^\bullet  : \triangle_{\Gb} M[q] \ra \triangle_{\Gb} M[p] $,
stands for the induced cosimplicial, resp. simplicial, map, and 
$\Om^k(\D^p \ts \triangle_{\Gb} M[q] $ denotes the $k$-forms on $\D^p \ts \triangle_{\Gb} M[q] $ which
are extendable to smooth forms on $V^p  \ts \triangle_{\Gb} M[q]$, where
$V^p = \{\tb = (t_0, \ldots , t_p) \in \Rb^{p+1} \, \mid \,  t_0 + \ldots + t_p =1\} $.
 As in the case of the previous complex, there is a homogeneous description of the
 simplicial de Rham complex, $\{\Om^\bullet ({|\bar\triangle}_{\Gb} M|), d \}$,
 consisting of the $\Gb$-invariant compatible forms on the geometric realization 
  $|\bar\triangle_{\Gb} M|$. The simplicial manifold $\bar\triangle_{\Gb} M$ 
  is defined as follows:
\begin{align*} 
\bar\triangle_{\Gb} M = \{\bar\triangle_{\Gb} M[p] := 
 \Gb^{p+1} \ts M\}_{p\geq 0} \, ,
\end{align*}
with face maps 
$\bar\p_i : \bar\triangle_{\Gb} M[p] \ra \bar\triangle_{\Gb} M[p-1]$,  \, $ 1 \leq i \leq p$, \,  given by 
\begin{equation*} 
\bar\p_i (\rho_0, \ldots , \rho_p, \, x) \, = \, (\rho_0, \ldots , \check{\rho_i},
 \ldots , \rho_{p}) ,  \quad 0 \leq i \leq p ,
\end{equation*} 
 and degeneracies
$$
\bar\s_i (\rho_0, \ldots , \rho_p, \, x) = (\rho_0, \ldots , \rho_i , \rho_i, \ldots , \rho_p, x) \, ,
\quad 0 \leq i \leq p .
$$
 The compatible forms $\om = \{\om_p\}_{p\geq 0} \in \Om^\bullet ({|\bar\triangle}_{\Gb} M|$
 satisfy the invariance condition
  \begin{align} \label{cov2}
(\rho^{-1})^*\om (\rho_0 \rho, \ldots , \rho_{p} \rho) =  \om (\rho_0, \ldots , \rho_p) ,
\quad \forall \, \rho, \rho_i \in \Gb .
\end{align}

By~\cite[Thm 2.3]{Dupont}, the operation of integration along along the fibers
\begin{align} \label{circint}
\oint_{\D^p}: \Om^{\bullet} (\D^p \ts \triangle_{\Gb} M[p]) \ra 
 \Om^{\bullet -p} (M[p])  \,
\end{align}
establishes a quasi-isomorphism
between the complexes  $\{\Om^\bullet (|\triangle_{\Gb} M|), d \}$ and
$\{ C^{\rm tot} \left(\Gb, \Om^\ast (M)\right), \d \pm d \}$. 
 
\smallskip

Instead of the \textit{differentiable} variants of the above complexes
utilized in~\cite{DHC}, we shall employ here their \textit{regular} versions,
defined as follows.
  
A cochain $\om \in \bar{C}^p \left(\Gb, \Om^q (M)\right)$ will be called 
{\em regular differentiable} if for any local chart $U \subset M$ with coordinates 
$(x^1, \ldots , x^n)$,
\begin{align} \label{difco}
\om (\rho_0, \ldots , \rho_p, x) = 
\sum P_I \left(x, j^k_x(\rho_0), \ldots , j^k_x(\rho_p) \right) dx^I ,
\end{align}
with the functions $P_I$ depending polynomially of a finite number of
jet components of $\rho_a$,  $1 \leq a \leq p$ and of $\big(\det \rho'_a (x)\big)^{-1}$,
where $\rho'_a(x)$ denotes the Jacobian matrix $\big(\p_i\rho^j_a (x)\big)$.
As usual, $dx^I =dx^{i_1} \wg \ldots \wg dx^{i_q}$,
with $I = (i_1< \ldots < i_q)$ running through the set of strictly increasing $q$-indices.
 The cohomology of the total complex 
$\{ \bar{C}_{\rm rd}^{\rm tot} \left(\Gb, \Om^\ast (M)\right), \d + d \}$ 
thus obtained will be denoted  $H_{\rm rd, \Gb}^{\bullet}\left(M, \Rb \right)$.
\smallskip

Similarly, the \textit{regular differentiable} simplicial de Rham complex
 is defined as the subcomplex $\{\Om_{\rm rd}^\bullet (|\bar\triangle_{\Gb} M|), d \}$ of
$\{\Om^\bullet (|\bar\triangle_{\Gb} M|), d \}$  
consisting of the $\Gb$-invariant compatible forms
$\{\om_p\}_{p\geq 0}$ whose components 
satisfy the analogous condition:
 \begin{align} \label{diffo}
 \begin{split}
\om_p (\tb; \rho_0, \ldots , \rho_p, x) = 
\sum P_{I, J} \left(\tb; x, j^k_x(\rho_0), \ldots , j^k_x(\rho_p) \right) dt^I \wg dx^J ,
 \end{split}
\end{align}
with $P_{I, J}$ of the same form as in \eqref{difco}.
We denote by $H_{\rm rd}^\bullet (|\triangle_{\Gb} M|, \Rb)$ the cohomology of the
complex $\{\Om_{\rm rd}^\bullet (|\bar\triangle_{\Gb} M|), d\}$.

\begin{thm} \label{difDup}
The chain map $\displaystyle \oint_{\D^\bullet} : \Om_{\rm rd}^\bullet (|\bar\triangle_{\Gb} M|) \ra 
\bar{C}_{\rm rd}^{\bullet} \left(\Gb, \Om^\ast (M)\right)$ induces an isomorphism
$H_{\rm rd}^\bullet (|\triangle_{\Gb} M|, \Rb) \, \cong H_{\rm rd, \Gb}^{\bullet}\left(M, \Rb \right)$.
\end{thm}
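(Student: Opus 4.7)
The strategy is to follow the proof of Dupont's quasi-isomorphism \cite[Thm 2.3]{Dupont} essentially verbatim, and to verify that at each step the chain maps and the chain homotopy involved restrict to the regular differentiable subcomplexes. Concretely, I would proceed in three steps: (i) check that $\oint_{\D^\bullet}$ takes $\Om_{\rm rd}^\bullet(|\bar\triangle_\Gb M|)$ into $\bar C_{\rm rd}^\bullet(\Gb,\Om^\ast(M))$; (ii) check that Dupont's explicit homotopy inverse $\Ec$ preserves regular differentiability in the opposite direction; (iii) check that the chain homotopy between $\Ec\circ\oint_{\D^\bullet}$ and the identity also preserves regular differentiability. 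Once (i)--(iii) are established, the proof of the cohomological isomorphism is identical to Dupont's.

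For (i), if $\om_p$ has local expression \eqref{diffo}, then $\oint_{\D^p}\om_p$ is obtained by integrating the polynomial $P_{I,J}(\tb;\,\cdot\,)$ against monomials in $\tb$ over the standard simplex. The outcome is a polynomial in the jet variables and the factors $(\det\rho_a'(x))^{-1}$, independent of $\tb$, hence of the form \eqref{difco}. For (ii), I would use Dupont's ``Whitney embedding'' formula, sending a homogeneous cochain $\bar c\in\bar C^q(\Gb,\Om^\ast(M))$ to the compatible form with components
\begin{equation*}
\Ec(\bar c)_p \;=\; \sum_{0\le i_0<\cdots<i_q\le p} q!\, w_{i_0\ldots i_q}(\tb)\wg (\bar\partial_{i_0\ldots i_q})^{\ast}\bar c,
\end{equation*}
where $w_{i_0\ldots i_q}(\tb)=\sum_k(-1)^k t_{i_k}\,dt_{i_0}\wg\cdots\widehat{dt_{i_k}}\cdots\wg dt_{i_q}$ and $\bar\partial_{i_0\ldots i_q}:\bar\triangle_\Gb M[p]\to\bar\triangle_\Gb M[q]$ is the canonical face projection that drops the $\rho_j$ with $j\notin\{i_0,\ldots,i_q\}$. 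Because this projection only forgets coordinates, the pullback $(\bar\partial_{i_0\ldots i_q})^{\ast}\bar c$ is again polynomial in the jets $j^k_x\rho_{i_a}$ and in $(\det\rho'_{i_a}(x))^{-1}$; wedging with the polynomial form $w_{i_0\ldots i_q}(\tb)$ in $\tb$ then produces a component of the shape \eqref{diffo}. Finally, for (iii), Dupont's homotopy is likewise built from integration over sub-simplices in $\tb$ together with pullbacks along face and degeneracy maps of $\bar\triangle_\Gb M$, all of which preserve the regular differentiable condition by the same reasoning.

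\textbf{Main obstacle.} The only real thing to verify is that no step in Dupont's construction secretly differentiates with respect to the group variables $\rho_a$, or introduces any non-polynomial dependence on their jets beyond the permitted factors $(\det\rho_a')^{-1}$. Since every operation in the proof is of one of three types---pullback along a simplicial projection on $\Gb^{p+1}\ts M$, wedge with a polynomial form in $\tb$, or integration in $\tb$ over a face of $\D^p$---this never occurs, and the bookkeeping goes through. With the regular differentiable versions of the chain maps and homotopies in hand, the identities $\oint_{\D^\bullet}\circ\Ec=\id$ and $\Ec\circ\oint_{\D^\bullet}\sim\id$ of \cite{Dupont} yield the asserted isomorphism $H_{\rm rd}^\bullet(|\triangle_\Gb M|,\Rb)\cong H_{\rm rd,\Gb}^\bullet(M,\Rb)$.
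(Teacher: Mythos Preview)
Your proposal is correct and follows essentially the same approach as the paper: the paper's proof simply notes that integration along fibers obviously sends regular differentiable forms to regular differentiable cochains, and that Dupont's argument in \cite[Theorem~2.3]{Dupont} goes through verbatim because the natural chain maps in both directions and the chain homotopies relating them preserve the regular differentiable subcomplexes. Your write-up is a more explicit unpacking of exactly this observation, including the Whitney-form description of the inverse map and the reason no non-polynomial dependence on the jets is introduced.
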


\begin{proof}
The operation of integration along the fibers obviously maps
$\Om_{\rm rd}^\bullet (|\bar\triangle_{\Gb} M|)$ to
$\bar{C}_{\rm rd}^{\bullet} \left(\Gb, \Om^\ast (M)\right)$. The justification
of the parallel result in~\cite[Theorem 2.3]{Dupont} applies here too,
since the natural chain maps in both directions and
the chain homotopies relating them 
preserve the regular differentiable subcomplexes.  
\end{proof}

  \subsection{Van Est-Haefliger isomorphism relative to $\GL_n$} \label{ExpF}
 
For $k \in \Nb \cup \{\infty\}$ we let $F^{k}M$ denote the frame bundle of order $k$,
 formed of $k$-jets $j_0^k(\phi)$ at $0$  
 of local diffeomorphisms $\phi$ from a neighborhood 
 of $0 \in \Rb^n$ to a neighborhood of $\phi(0) \in M$. In particular $F^{1}M = FM$ is the 
 usual principal frame bundle over $M$ with structure group $\Gc^1 = \GL_n$.
 Each $F^{k}M$  is a principal
 bundle over $M$ with structure group $\Gc^k$ formed of $k$-jets at $0$ of 
 local diffeomorphisms of $\Rb^n$ preserving $0$.  
The group $\Gb = \Diff (M)$ operates naturally 
on the left on $F^{k}M$ by left translations.
 
\smallskip
 
Let $\Fa_n$ be the Lie algebra of formal vector fields on $\Rb^n$ and 
denote by $C^{\ast}(\Fa_n)$ its Gelfand-Fuks cohomology complex~\cite{GF}.
Each $\, \om  \in C^{m}(\Fa_n)$ gives rise to a  $\Gb$-invariant
 form $\tilde\om \in \Om^m (F^{\ify}M)$, and the assignment 
$\om \in C^\bullet(\Fa_n) \mapsto \tilde\om \in \Om^\bullet (F^{\ify}M)^\Gb$
is a DGA-isomorphism, by means of which we shall identify the two DG-algebras.
 
%%%%%%%%%%% 
 \smallskip
  
After fixing a torsion-free affine connection $\nabla$ on $M$, we define 
a cross-section $\s_{\nabla} : FM \ra F^{\ify}M$
of the natural projection $\pi_1 : F^\ify M \ra FM$
by the formula
\begin{equation} \label{jcon}
\s_{\nabla} (u) = j_0^{\ify} (\exp_x^{\nabla} \circ u) \ , \qquad u \in F_x M \, .
\end{equation}
Clearly,  $\s_{\nabla}$ is ${\rm GL}_n$-equivariant
 and Diff-equivariant:
\begin{equation} \label{nat}
\s_{\nabla^\phi} = {\phi}^{-1} \circ \s_{\nabla}  \circ {\phi} \ , 
\qquad \fl \, \phi \in \Gb  \, ;
\end{equation}
here $\nabla^\phi = \phi_\ast^{-1} \circ \nabla \circ \phi_\ast$, with
connection form ${\phi}^* \om$. 

For each $p \in \Nb$,
we define $\s_p : \D^p \ts \bar\triangle_{\Gb} FM[p] \ra F^\ify M$ by  
\begin{align} \label{xchng2}
\begin{split}
\s_p (\tb ; \rho_0 , \ldots , \rho_p, u) &= \,
\s_{\nabla (\tb; \rho_0 , \ldots , \rho_p)} (u)  ,\\
\text{where}   \quad
 \nabla (\tb; \rho_0 , \ldots , \rho_p) &= \, \sum_0^p \ t_i \, \nabla^{\rho_i} , \qquad
 \tb \in \Delta^p .
\end{split}
\end{align}
The collection $\, \hat{\s} = \{\s_p \}_{p \geq 0}$ descends to the geometric realization of
 $\bar\triangle_{\Gb} FM$, giving a map 
 $\, \hat{\s} : |\bar\triangle_{\Gb} FM| \ra F^\ify M$. By construction, $\hat{\s}$ is
 $\GL_n$-equivariant and therefore it also induces a map 
$\, \hat{\s}^{\GL_n} : |\bar\triangle_{\Gb} M| \ra F^\ify M/\GL_n$. 
 
\begin{lem} \
 If $\om \in C^\bullet(\Fa_n)$ then 
 $\hat{\s}^*(\tilde\om) \in \Om_{\rm rd}^\bullet (|\bar\triangle_{\Gb} FM|$.
  \end{lem}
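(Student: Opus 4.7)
The plan is to verify the defining regularity condition \eqref{diffo} for $\hat\s^\ast \tilde\om$ directly in local coordinates. Since $\hat\s$ descends $\Gb$-equivariantly to the geometric realization by construction, the compatibility and $\Gb$-invariance of $\hat\s^\ast \tilde\om$ are automatic, and only the regular-differentiable local structure needs to be checked.

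First, I would exploit the finite jet order of any Gelfand-Fuks cochain $\om \in C^m(\Fa_n)$: there exists $k$ such that $\tilde\om$ descends to a form on $F^{k+1}M$. In the standard chart on $F^{k+1}M$ with coordinates $(x^i, y^i_\alpha)$ indexed by multi-indices of length $|\alpha|\leq k+1$, one checks that $\tilde\om$ is a polynomial in the $y^i_\alpha$ and $dy^i_\alpha$ times an integer power of $\bigl(\det (y^i_j)\bigr)^{-1}$. Verifying \eqref{diffo} for $\hat\s^\ast \tilde\om$ is therefore reduced to showing that the jet coordinates $y^i_\alpha \circ \s_p$ are, in these charts, polynomial in $\tb$ and in the jets of the $\rho_a$ and of $u$, with denominator a polynomial in the $\det \rho_a'(x)$.

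Second, I would analyze the affine combination of connections
$$\nabla(\tb;\rho_0,\ldots,\rho_p) \, = \, \sum_{a=0}^{p} t_a\, \nabla^{\rho_a}.$$
By the classical transformation law for a torsion-free affine connection under a diffeomorphism, the Christoffel symbols of $\nabla^{\rho_a}$ are rational expressions polynomial in $j^2_x(\rho_a)$ with denominator a power of $\det \rho_a'(x)$, so the Christoffel symbols of $\nabla(\tb;\rho_0,\ldots,\rho_p)$ and all their $x$-partials up to any fixed order are affine in $\tb$ with coefficients already of the regular-differentiable form required by \eqref{difco}.

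Third and main, $\s_\nabla(u) = j_0^\infty(\exp_x^\nabla\circ u)$ is computed by solving the geodesic ODE iteratively at $0$: each Taylor coefficient of $\exp_x^\nabla$ is a universal polynomial in the Christoffel symbols of $\nabla$ and finitely many of their $x$-partials, and precomposition with the linear frame $u$ is polynomial in the coordinates of $u$. Combining this with the second step, each coordinate $y^i_\alpha \circ \s_p$ becomes polynomial in $\tb$ and in a finite jet of the $\rho_a$ and of $u$, divided by a product of powers of $\det \rho_a'(x)$. Pulling $\tilde\om$ back through $\s_p$ preserves this structure, and the factor $\bigl(\det(y^i_j)\bigr)^{-1}$ in $\tilde\om$ evaluates to a smooth function on $FM$, so $\hat\s^\ast \tilde\om$ satisfies \eqref{diffo}. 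The principal obstacle is keeping tight control of the inverse-Jacobian factors; the key observation is that they enter only once — through the transformation rule for connections — while the geodesic recursion only forms polynomial combinations thereafter, so no further negative Jacobian powers can arise.
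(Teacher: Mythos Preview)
Your proposal is correct and follows essentially the same approach as the paper: both reduce the claim to the polynomial/rational dependence of the jet coordinates of $\s_{\nabla(\tb;\rho_0,\ldots,\rho_p)}$ on the jets of the $\rho_a$, which in turn comes from the Taylor expansion of the geodesic exponential in terms of Christoffel symbols. The paper's proof is terser---it invokes normal coordinates for $\nabla$ and defers the explicit expression of $\s_{\nabla^\phi}$ to \cite[Lemma~3.5]{DHC}, checking only the single-diffeomorphism case---whereas you spell out the geodesic recursion and treat the full affine combination $\sum t_a \nabla^{\rho_a}$ directly; your version is thus slightly more self-contained, but the underlying mechanism is identical.
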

 
 \begin{proof} First we note that, because
$\tilde\om$ is $\Gb$-invariant, $\hat{\s}^*(\tilde\om)$ is 
easily seen to be a compatible form. 
 It remains to check that for any $ \phi \in \Gb$ and any local chart $U$, with the
 notation as in \eqref{difco}, one has
\begin{align*}
 \s_{\nabla^\phi}^*(\tilde\om)(x)= 
 \sum P_I \left(x, j^k_x(\phi)\right) dx^I , \qquad x \in U .
 \end{align*}
Using normal coordinates with respect to $\nb$, this follows from the explicit
expression for $\s_{\nabla^\phi}$ in the proof of Lemma 3.5 in~\cite{DHC}.
 \end{proof}
 
In view of the above lemma, it makes sense to define
 $\,  \Cc_\nabla :  C^\bullet(\Fa_n) \ra \Om_{\rm rd}^\bullet (|\bar\triangle_{\Gb} FM|)$
 by
\begin{align} \label{crux}
 \Cc_\nabla (\om )\, = \, 
  \hat{\s}^*(\tilde\om) \in \Om_{\rm rd}^\bullet (|\bar\triangle_{\Gb} FM|) .
\end{align}

The map $\,  \Cc_\nabla$ is a homomorphism of DG-algebras. which in turn
induces a DGA-homomorphism 
at the level of $\GL_n$-basic forms,
\begin{align} \label{crux2}
 \Cc^{\GL_n}_\nabla: C^\bullet(\Fa_n, \GL_n) \ra 
 \Om_{\rm rd}^\bullet  (|\bar\triangle_{\Gb} M|) .
 \end{align}
 
 \begin{thm} \label{main1}
 The map $\, \Cc^{\GL_n}_\nabla$ is a quasi-isomorphism of DG-algebras.  
   \end{thm}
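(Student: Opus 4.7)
The plan is to parallel the proof of the analogous $\O_n$-relative result carried out in~\cite{DHC}, tracking the regular differentiable condition at each step. The available ingredients are the $\GL_n$-equivariance~\eqref{nat} of the section $\s_\nabla$, the quasi-isomorphism of Theorem~\ref{difDup}, and the van Est-Haefliger/Gelfand-Fuks identification of $C^\bullet(\Fa_n, \GL_n)$ with the cohomology of $\Gb$-invariant forms on $F^\ify M/\GL_n$.

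First, I would lift the statement to the full frame bundle $FM$. Since $\s_\nabla$ is $\GL_n$-equivariant by~\eqref{nat} and the construction~\eqref{xchng2} of $\s_p$ is natural in $\nabla$, the map $\Cc_\nabla: C^\bullet(\Fa_n) \to \Om_{\rm rd}^\bullet(|\bar\triangle_\Gb FM|)$ is a $\GL_n$-equivariant DG-homomorphism whose $\GL_n$-basic part is precisely $\Cc^{\GL_n}_\nabla$ of~\eqref{crux2}. Because $\GL_n$ acts freely on $FM$, the projection $|\bar\triangle_\Gb FM| \to |\bar\triangle_\Gb M|$ is a principal $\GL_n$-bundle and its $\GL_n$-basic compatible forms correspond bijectively to all compatible forms on the base. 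It therefore suffices to show that $\Cc_\nabla$ itself is a quasi-isomorphism; the theorem then follows by extracting $\GL_n$-basic components on both sides.

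To establish the quasi-isomorphism at the $FM$ level, I would combine Theorem~\ref{difDup} with a van Est-type spectral sequence argument. By Theorem~\ref{difDup}, the cohomology of the target is $H^\bullet_{\rm rd, \Gb}(FM, \Rb)$, computed via the regular differentiable homogeneous Bott bicomplex $\{\bar C^\bullet_{\rm rd}(\Gb, \Om^\bullet(FM)), \bar\d, d\}$. Composing $\Cc_\nabla$ with Dupont's integration $\oint_{\D^\bullet}$ produces, for each $\om \in C^\bullet(\Fa_n)$, an explicit cocycle in that bicomplex. To see that the induced map on cohomology is an isomorphism, one invokes the van Est-Haefliger theorem realized geometrically by $\hat\s$: every class in $H^\bullet_{\rm rd, \Gb}(FM, \Rb)$ is represented by the pullback along $\hat\s$ of a $\Gb$-invariant form on $F^\ify M$, and such forms are exhausted, via $\om \mapsto \tilde\om$, by the image of $C^\bullet(\Fa_n)$. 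Compatibility with the differentials on both sides is automatic from the naturality of the de Rham complex together with the cosimplicial identities underlying~\eqref{xchng2}.

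The main obstacle will be verifying that the chain homotopies implementing the van Est-Haefliger isomorphism preserve the regular differentiable condition~\eqref{diffo}. These homotopies assemble Dupont's fiber integration along $\D^p$ with a contracting homotopy on $F^\ify M \to FM$ built from normal coordinates with respect to $\nb$. Both operations act on the simplicial coordinates $\tb$ and on the base variable $x$ while leaving intact the polynomial dependence on the jets $j^k_x(\rho_i)$ and on $(\det\rho'_i(x))^{-1}$, so they preserve~\eqref{difco}-\eqref{diffo}. Together with the explicit local expression for $\s_{\nabla^\phi}$ from~\cite[Lemma~3.5]{DHC} (which was already the basis for the preceding lemma), this observation allows the argument of~\cite{DHC} to go through verbatim in the regular differentiable, $\GL_n$-relative setting.
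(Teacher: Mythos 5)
Your proposal has two genuine gaps, and it misses the one point where the ``regular differentiable'' hypothesis actually does work.

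First, the reduction ``it suffices to show that $\Cc_\nabla$ itself is a quasi-isomorphism; the theorem then follows by extracting $\GL_n$-basic components'' is unjustified. Restricting a quasi-isomorphism to basic (or invariant) subcomplexes does not in general yield a quasi-isomorphism, and since $\GL_n(\Rb)$ is not compact you cannot repair this by averaging over the group. The paper never makes this reduction: it works throughout with the quotients, i.e.\ with the maps $(\Id \ts \pi_1)_{\GL_n}^*$ and $(\Id \ts \hat\s)_{\GL_n}^*$ between $\Om_{\rm rd}^\bullet (|\bar\triangle_{\Gb} M|)$ and $\Om_{\rm rd}^\bullet (|\bar\triangle_{\Gb} F^\ify M/\GL_n|)$. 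Second, your surjectivity step is circular: you invoke ``the van Est--Haefliger theorem realized geometrically by $\hat\s$'' to say every class on $FM$ is pulled back from $F^\ify M$, but the relative van Est--Haefliger isomorphism is Theorem~\ref{main2}, which the paper \emph{deduces from} Theorem~\ref{main1}. The actual mechanism is elementary and you should state it: $(\Id\ts\pi_1)\circ(\Id\ts\hat\s)=\Id$, so $(\Id\ts\hat\s)_{\GL_n}^*$ is a one-sided inverse of $(\Id\ts\pi_1)_{\GL_n}^*$; the fiber $\Gc^\infty_1$ of $F^\ify M/\GL_n \ra M$ is pronilpotent, hence algebraically contractible, so $(\Id\ts\pi_1)_{\GL_n}^*$ is a quasi-isomorphism, and therefore so is its left inverse.

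The remaining (and decisive) step, which your proposal does not address, is that the inclusion of $C^\bullet(\Fa_n, \GL_n) \equiv \Om^\bullet (F^\ify M/\GL_n)^\Gb$ into $\Om_{\rm rd}^\bullet (|\bar\triangle_{\Gb} F^\ify M/\GL_n|)$ is itself a quasi-isomorphism. This is done by the horizontal homotopy of Kumar--Neeb type, whose formula inserts $(\rho k)^{-1}$ as an extra group argument and then applies the projection $\pi_{\GL_n}$ onto the $\GL_n$-invariant isotypical component of the right regular representation of $\GL_n$ acting on its ring of regular functions. That projection is only defined because cochains depend on the group variables through polynomials in the jet components and in $(\det\rho'(x))^{-1}$ --- this is precisely why the paper replaces ``differentiable'' by ``regular differentiable,'' and it is the essential modification relative to the $\O_n$ case (where compactness of $\O_n$ allows genuine averaging). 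Your final paragraph discusses preservation of the regular condition by fiber integration and by the contraction of $F^\ify M \ra FM$, which is fine but peripheral; the homotopy that truly needs regularity is this algebraic averaging over the noncompact reductive group $\GL_n$.
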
 
  
 \begin{proof}  The proof follows along the same lines as that of \cite[Theorem 1.4]{DHC}.
For any connection $\td\nabla$, one has
 \begin{equation*} 
(\pi_1 \circ \s_{\td\nabla}) (u) = j_0^{1} (\exp_x^{\td\nabla} \circ u) = u, \quad u \in F_x M \, .
\end{equation*}
After upgrading $\pi_1$ and $\hat{\s}$ to simplicial maps
$ \Id \ts \pi_1 : |\bar\triangle_{\Gb} F^\ify M| \ra |\bar\triangle_{\Gb} FM|$ and
$\Id \ts \hat{\s}  : |\bar\triangle_{\Gb} FM| \ra |\bar\triangle_{\Gb} F^\ify M|$,
one obtains
\begin{align*} 
(\Id \ts \pi_1) \circ (\Id \ts \hat{\s})  \, =\, \Id .
\end{align*}
Hence $ (\Id \ts \hat{\s})* : \Om_{\rm rd}^\bullet (|\bar\triangle_{\Gb} F^\ify M|) \ra
 \Om_{\rm rd}^\bullet (|\bar\triangle_{\Gb} FM|)$
is a left inverse for 
$(\Id \ts \pi_1)^* : \Om_{\rm rd}^\bullet (|\bar\triangle_{\Gb} FM|) \ra 
\Om_{\rm rd}^\bullet (|\bar\triangle_{\Gb} F^\ify M|)$. Both maps 
are $\GL_n$-equivariant and thus descend to maps 
\begin{align*} 
\begin{split}
 (\Id \ts \hat{\s})_{\GL_n}^* &: \Om_{\rm rd}^\bullet (|\bar\triangle_{\Gb} F^\ify M/\GL_n|) \ra
 \Om_{\rm rd}^\bullet (|\bar\triangle_{\Gb} M|), \\
 \text{resp.} \quad
(\Id \ts \pi_1)_{\GL_n}^* &: \Om_{\rm rd}^\bullet (|\bar\triangle_{\Gb} M|) \ra 
\Om_{\rm rd}^\bullet (|\bar\triangle_{\Gb} F^\ify M/\GL_n|) .
\end{split}
\end{align*}
The typical fiber $\Gc^k/\GL_n$ of $F^\infty M/\GL_n \ra M $ can be canonically identified 
to the pronilpotent group $\Gc^k_1$ of $\infty$-jets at $0$ of 
 local diffeomorphisms of $\Rb^n$ preserving $0$ to order $1$. As such,
 it is  {\em algebraically} contractible, hence
 $(\Id \ts \pi_1)_{\GL_n}^*$ induces an isomorphism in regular differentiable cohomology.
 Therefore so does its inverse $ (\Id \ts\hat{\s})_{\GL_n}^*$.
 \smallskip
 
 On the other hand, identifying
 the $\GL_n$-basic forms on $F^\ify M$ with forms on 
 $P^\ify M = F^\ify M/\GL_n$, one defines a
 horizontal homotopy as in~\cite[Lemma 2.3]{Kumar},
 by the formula
 \begin{align*}
 \begin{split}
 (H\a)_{p-1}(\tb; &\rho_0, \ldots, \rho_{p-1} , \, j_0^\ify(\rho) \GL_n) = \\
&\pi_{\GL_n} [k \in \GL_n
\mapsto \a_p (\tb; (\rho k)^{-1}, \rho_0, \ldots, \rho_{p-1} , \, j_0^\ify(\rho) \GL_n)] ,
 \end{split}
 \end{align*}
 where $\pi_{\GL_n}$ stands for the projection on the  $\GL_n$-invariant (constant)
part with respect to the decomposition into isotypical components
of the right regular representation of $\GL_n$ on its ring of 
regular functions tensored by the fiber.

Therefore the natural inclusion
 of $C^\bullet(\Fa_n, \GL_n) \equiv \Om^\bullet (F^\ify M/\GL_n)^\Gb$ into  
 $\Om_{\rm rd}^\bullet (|\bar\triangle_{\Gb} F^\ify M/\GL_n|)$ 
is also quasi-isomorphism. To complete the
proof it remains to observe that
 when restricted to $\GL_n$-basic forms the map
 $ (\Id \ts\triangle \s )^*$ coincides with $\Cc_\nabla^{\GL_n}$.
  \end{proof}

Combining the   Theorems \ref{difDup} and \ref{main1}
one obtains the `relative to $\GL_n$' version of
the van Est-Haefliger isomorphism~\cite[\S IV.4]{HaefDC}.

\begin{thm} \label{main2}
 The map  
\begin{align*}
 \Dc^{\GL_n}_\nabla = \oint_{\D^\bullet} \Cc^{\GL_n}_\nabla
 : C^\bullet(\Fa_n, \GL_n) \ra 
\bar{C}_{\rm rd}^{\rm tot \, \bullet} \left(\Gb, \Om^\ast (M)\right)
\end{align*}
is a quasi-isomorphism of complexes. 
\end{thm}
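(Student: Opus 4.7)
The plan is to deduce Theorem \ref{main2} as a formal consequence of the two preceding theorems, essentially by observing that the composition of two quasi-isomorphisms is a quasi-isomorphism. By construction
\[
\Dc^{\GL_n}_\nabla \; = \; \oint_{\D^\bullet} \circ\, \Cc^{\GL_n}_\nabla ,
\]
so the map in question factors as
\[
C^\bullet(\Fa_n, \GL_n) \; \xrightarrow{\;\Cc^{\GL_n}_\nabla\;} \; \Om_{\rm rd}^\bullet (|\bar\triangle_{\Gb} M|) \; \xrightarrow{\;\oint_{\D^\bullet}\;} \; \bar{C}_{\rm rd}^{\rm tot \,\bullet}(\Gb, \Om^\ast (M)),
\]
where each arrow is a chain map with respect to the appropriate differentials ($d$ on the first two complexes, and $\d + d$ on the third).

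First I would invoke Theorem \ref{main1}, which asserts that the left arrow $\Cc^{\GL_n}_\nabla$ is a quasi-isomorphism. Next I would invoke Theorem \ref{difDup}, applied directly to the manifold $M$ (rather than to its frame bundle $FM$), which asserts that the right arrow $\oint_{\D^\bullet}$ is a quasi-isomorphism between $\{\Om_{\rm rd}^\bullet (|\bar\triangle_{\Gb} M|), d\}$ and $\{\bar{C}_{\rm rd}^{\rm tot \,\bullet}(\Gb, \Om^\ast(M)), \d + d\}$. Composing the two isomorphisms induced on cohomology yields exactly the conclusion of the theorem.

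The only point of substance worth checking explicitly is the compatibility of fiber integration with the regular differentiable condition: if $\om_p$ has the form \eqref{diffo}, then $\oint_{\D^p} \om_p$ has the form \eqref{difco}. This is immediate, since integration along the simplicial parameters $\tb \in \D^p$ integrates out only the $\tb$-dependence and leaves untouched the polynomial dependence on $x$, on the jets $j^k_x(\rho_a)$, and on the inverse Jacobian determinants; this is already implicit in the proof of Theorem \ref{difDup}. Because this compatibility is built into the statements of the two theorems we are composing, no genuine obstacle arises, and I expect the write-up to amount to little more than the diagram above together with a sentence identifying each arrow as a quasi-isomorphism.
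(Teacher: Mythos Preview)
Your proposal is correct and matches the paper's own argument: the paper simply states that Theorem~\ref{main2} follows by combining Theorems~\ref{difDup} and~\ref{main1}, i.e., by composing the two quasi-isomorphisms exactly as you describe. Your additional remark about fiber integration preserving the regular differentiable condition is already subsumed in the statement of Theorem~\ref{difDup}, so nothing further is needed.
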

\smallskip

\subsection{Equivariant Chern cocycles} \label{ECC}

Let $\, W(\Fg\Fl_n) = \wg^\bullet \Fg\Fl^*_n \ot S(\Fg\Fl_n)$  be the Weil algebra
of $\Fg\Fl_n$ with its usual grading, and let
$\,\hat{W}(\Fg\Fl_n) = W(\Fg\Fl_n)/ \Ic_{2n}$ be its truncation by
the ideal generated by the elements of $S(\Fg\Fl_n) $ of degree $> 2n$.   
 The universal connection and curvature forms 
 $\vartheta = (\vartheta^i_j)$ and $R = (R^i_j)$, defined as in
 \cite[\S 2]{Bott*}, generate
 a DG-subalgebra $CW^\bullet (\Fa_n)$ of $C^\bullet (\Fa_n)$,
 which can be identified with $\,\hat{W}(\Fg\Fl_n)$. Let
  $CW^\bullet (\Fa_n, \GL_n)$, resp. $\hat{W}(\Fg\Fl_n,  \GL_n)$, denote
  their subalgebras consisting of $ \GL_n$-basic elements,
  also identified as above. 
  It follows from Gelfand-Fuks~\cite{GF} (cf. also \cite{Godbillon}) that the inclusion of
the latter into $C^\bullet (\Fa_n, \GL_n)$ is a quasi-isomorphism.
Thus, by Theorems \ref{main1} and \ref{main2},
\begin{align} \label{UCW1}   
 \Dc^{\GL_n}_\nabla : \hat{W}(\Fg\Fl_n,  \GL_n) \equiv CW^\bullet(\Fa_n, \GL_n) \ra 
\Om_{\rm rd}^\bullet  (|\bar\triangle_{\Gb} M|) 
\end{align} 
is a DGA quasi-isomorphism and
\begin{align} \label{UCW2}   
 \Dc^{\GL_n}_\nabla : \hat{W}(\Fg\Fl_n,  \GL_n) \equiv CW^\bullet(\Fa_n, \GL_n) \ra 
\bar{C}_{\rm rd}^{\rm tot \, \bullet} \left(\Gb, \Om^\ast (M)\right)
\end{align} 
 is a quasi-isomorphism of complexes.
 
The cohomology of $\hat{W}(\Fg\Fl_n,  \GL_n)$ is well-known to be
isomorphic to the truncated polynomial ring 
 generated by the universal Chern classes $P_{2n} [c_1, \ldots, c_n]$, 
 with $c_1, \ldots, c_n$
given by the invariant polynomials
\begin{align} \label{uchern}
 c_q (A) \, = \, \sum_{1\le i_1 < \ldots < i_q \le n} \sum_{\mu \in S_q} 
 (-1)^\mu A^{i_1}_{\mu(i_1)} \cdots A^{i_q}_{\mu(i_q)} ,
  \quad A \in \Fg \Fl_n.
\end{align}
The above quasi-isomorphisms allow to transport the standard
basis of $P_{2n} [c_1, \ldots, c_n]$ to a basis of 
$H_{\rm rd}^{\bullet} \left(\Gb, \Om^\ast (M)\right)$, as follows.

Let   $  \om_\nb = (\om_j^i)$, resp.  $\Om_\nb = (\Om_j^i )$, denote the matrix-valued 
connection form, resp. curvature form, corresponding to $\nb$.
One has the naturality relation (cf.~\cite[Lemma 18]{CM01}),
 \begin{align}\label{Luc}
\s_{\nabla}^* (\wt{\vartheta}_j^i) = \om_j^i \qquad \text{hence} \qquad
\s_{\nabla}^* (\wt{R}_j^i) = \Om_j^i .
\end{align}
   In homogeneous group coordinates (cf. \eqref{xchng}), the
  simplicial connection form-valued matrix
   $\hat{\om_\nb} = \{ \hat\om_p \}_{p \in \Nb}$ associated to $\nabla$
  has components
\begin{align} \label{scone}
 \hat\om_p (\tb ; \rho_0, \ldots , \rho_p) : = \sum_{i=0}^p t_i \rho_i^* (\om_\nb) ,  
\end{align}
and the 
 simplicial curvature form-valued matrix 
 $\hat{\Om_\nb} : = d \hat{\om_\nb} + \hat{\om_\nb} \wg \hat{\om}$
has components \, $\hat{\Om}_p = \hat{\Om}_p^{(1, 1)} + \hat{\Om}_p^{(0, 2)}$,
given by
\begin{align}  \label{scurv}
\begin{split}
&\hat{\Om}_p (\tb ; \rho_0, \ldots , \rho_p) \, = \, \sum_{i=0}^p dt_i \wdg \rho_i^* (\om_\nb) \, + \\
&\sum_{i=0}^p t_i \big(\rho_i^* (\Om_\nb) - \rho_i^* (\om_\nb) \wdg \rho_i^* (\om_\nb)\big) 
+ \sum_{i, j=0}^p t_i t_j \,  \rho_i^* (\om_\nb) \wdg  \rho_j^* (\om_\nb) .
\end{split}
 \end{align} 
The forms $\hat{\om}_j^i$ and $\hat{\Om}_j^i$ clearly belong to the 
regular differentiable de Rham complex $\Om_{\rm rd}^\bullet (|\bar\triangle_{\Gb} FM|)$.
In addition, the Chern forms $c_k (\hat\Om_\nb)$ are $\GL_n$-basic and therefore
descend to
$ \Om_{\rm rd}^{2k} (|\triangle_{\Gb}M|)$, and we denote by the same symbols
the corresponding cohomology classes. 
In view of the DGA quasi-isomorphism \eqref{UCW1}
the cohomology ring $H_{\rm rd}^\bullet  (|\bar\triangle_{\Gb} M|, \Cb)$ is
isomorphic to $P_{2n} [c_1, \ldots, c_n]$. Therefore the collection of forms
\begin{align} \label{chern-forms}
  c_J (\hat{\Om}_\nb) \, = \, 
   c_{j_1}(\hat{\Om}_\nb) \wg  \ldots \wg c_{j_q} (\hat{\Om}_\nb)
  \in \Om_{\rm rd}^{2|J|} (|\triangle_{\Gb}M| ,
\end{align}
with $J= (j_1 \leq \ldots \leq j_q)$ and $|J| := j_1 +\ldots + j_q \leq n$, 
represents a linear 
basis of $H_{\rm rd}^\bullet  (|\bar\triangle_{\Gb} M|, \Cb)$. Applying now
the quasi-isomorphism \eqref{UCW2} (which is linear, but not multiplicative)
one obtains representative cocycles for
a linear basis of $H_{\rm rd, \Gb}^\bullet  (M, \Cb)$, namely
\begin{align} \label{basis-forms}
  C_J (\hat{\Om}_\nb) \, := \, \oint_{\D^\bullet} c_J (\hat\Om_\nb) ,
  \quad  J= (j_1 \leq \ldots \leq j_q),\, \,  |J| \leq n  \} .
  \end{align}
 
\section{Hopf cyclic universal Chern classes}  \label{UCC}
 
\subsection{Hopf algebra $\Hc_n$ and its Hopf cyclic complex}  \label{Hn}

The Hopf algebra $\Hc_n $ arises quite naturally
as the symmetry structure of the convolution algebra 
$C_c^{\ify}  (\bar{\G}_n)$ of the \'etale groupoid $\bar{\G}_n$ of germs of 
local diffeomorphisms of $\Rb^n$ acting by prolongation on the frame bundle 
$F\Rb^n$, identified with the affine group $G=\Rb^n \ltimes \GL_n$.
Equivalently, it acts naturally on the crossed product algebras 
$ {\Ac_\Gamma} = C_c^{\ify} (F\Rb^n ) \rtimes \Gamma$, with
$\Gamma$ a discrete subgroup of $\Gb = \Diff {\Rb}^n$. We briefly review below
its operational construction, and refer the reader
to \cite{DHC} for a more detailed account.

The primary generators of $\Hc_n $ are the (horizontal, resp. vertical) 
left-invariant vector fields $\{ X_k, Y_i^j \mid i, j, k =1, \ldots , n\} $, 
that form the standard basis  of the Lie algebra $\Fg = \Rb^n \ltimes \Fg \Fl_n$ of $G$. 
The vector fields $Z \in \Fg$ are made to
act on the algebra $\Ac := C_c^{\ify} (F\Rb^n ) \rtimes \Gb $ by 
\begin{equation*}
Z (f \, U_{\vp}) \, = \,Z( f) \, U_{\vp} , \qquad f
\, U_{\vp}^* \in {\Ac} \, ,
\end{equation*}
the resulting linear operators on $\Ac$ satisfy generalized Leibnitz rules, which in the 
Sweedler notation take the form
\begin{equation*}
Z (a \, b) \, = \, Z_{(1)} (a) \, Z_{(2)}(b)  ,
\quad a, b \in {\Ac} .
\end{equation*}
In particular,  
\begin{equation*} 
X_k(a \, b) \, = \, X_k (a) \, b \, + \, a \,X_k (b) \, + \,
\d_{jk}^i (a) \, Y_{i}^{j} (b) \, ,
\end{equation*}
where  
\begin{align} \label{gijk}
\begin{split}
&\d_{jk}^i (f \, U_{\vp^{-1}}) \, =\, \g_{jk}^i  (\phi) \, f \,U_{\vp^{-1}} , \quad \text{with} \\
&\g_{jk}^i (\phi) (x, {\bf y}) \, =\, \left( {\bf y}^{-1} \cdot
{\phi}^{\prime} (x)^{-1} \cdot \part_{\mu} {\phi}^{\prime} (x) \cdot
{\bf y}\right)^i_j \, {\bf y}^{\mu}_k \, .
\end{split}
\end{align}
The operators $\, \d_{jk}^i $ are derivations, but their
successors
$\, \d_{jk \,  \ell_1 \ldots \ell_r}^i = [X_{\ell_r} , \ldots
[X_{\ell_1} , \d_{jk}^i] \ldots ]$,
 \begin{align} \label{highg}
\begin{split} 
&\d_{jk \,  \ell_1 \ldots \ell_r}^i \, ( f \, U_{\vp^{-1}}) =
 \g_{jk \,  \ell_1 \ldots \ell_r}^i (\phi)\, f \, U_{\vp^{-1}}\qquad   \text{where}\\ 
 & \g_{jk \,  \ell_1 \ldots \ell_r}^i (\phi) =
 X_{\ell_r} \cdots X_{\ell_1} \big(\g_{jk}^i (\phi)\big)  \, , \quad \phi \in \Gb ,
 \end{split}
\end{align}
obey progressively more elaborated Leibnitz rules.
 The subspace $\Fh_n$ of linear operators on $\Ac$ generated by the operators
$X_k$, $Y^i_j$, and  $\d_{jk \,  \ell_1  \ldots \ell_r}^i$
forms a Lie algebra $\Fh_n$. 

By definition, $\Hc_n$ is the algebra of linear operators on $\Ac$
generated by $\Fh_n$ and the scalars.
For $n >1$ the operators $\, \d_{jk \,  \ell_1 \ldots \ell_r}^i \,$
are not all distinct. They satisfy the ``structure identities''
\[
 \d_{j \ell \,  k}^i \, - \,  \d_{j k \,  \ell}^i \, = \,
 \d_{j k}^s \, \d_{s \ell}^i  \ - \, \d_{j \ell}^s \, \d_{s k}^i \, ,
\]
reflecting the flatness of the standard connection.
The algebra $\Hc_n$ is isomorphic to the quotient 
$\FA ({\Fh}_n)/\Ic$ of the universal enveloping
algebra $\FA ({\Fh}_n)$ by the ideal $\Ic$ generated by the above identities. It has
a distinguished character  $\d :\Hc_n \ra \Cb$, which 
 extends the modular character of
 ${\Fg \Fl}_n (\Rb)$, and is induced from the
 character of ${\Fh}_n$ defined by
 \[
\d(Y_i^j) = \d_i^j , \quad  \d(X_k) = 0, \quad \d(\d_{jk \,  \ell_1 \ldots \ell_r}^i) =0 .
 \]

The coproduct of $\Hc_n$  
stems from the interaction of $\Hc_n$ with the
product of $\Ac$. More precisely, any $h \in \Hc_n$ satisfy an identity of the form
\[
 h (a  b) \ = \ \sum_{(h)} \, h_{(1)} (a) \, h_{(2)} (b) \, ,
\quad  h_{(1)}, \, h_{(2)} \in \Hc_n , \quad a, b \in \Ac \, ,
\]
and this uniquely determines a
{\it coproduct} $\D : \Hc \ra \Hc_n \ot \Hc_n$, by setting (using Sweedler's notation)
\begin{equation*} 
\D ( h) \ = \ \sum_{(h)} \, h_{(1)} \ot h_{(2)} .
\end{equation*}
 The {\it counit} is \, $ \ve( h)  \, = \,  h (1)$, while the antipode $S$ is uniquely
 determined by its very definition 
 \[
\sum_{(h)} S(h_{(1)})  h_{(2)}  \ = \ \ve (h)\cdot1 \ = \ \sum_{(h)} h_{(1)} S(h_{(2)}) .
\]
Although the antipode $S$ fails to be involutive, its twisted version
 \[
 S_\d (h) \ = \  \sum_{(h)} \d(h_{(1)}) S(h_{(2)}) 
 \]
 does satisfy the property
 \begin{align} \label{invant}
S_\d^2 \, = \, \Id .
\end{align}

The algebra $ {\Ac}$ has a canonical trace, namely
 \begin{equation} \label{tr}
\tau \, (f \, U_{\vp})  \, = \, \left\{ \begin{matrix}
\displaystyle
  \int_{F\Rb^n} \, f  \, \varpi \, , \quad \text{if}
\quad \vp = \Id \, , \cr\cr \quad 0 \, , \qquad \qquad
\text{otherwise} \, ;
\end{matrix} \right.
\end{equation}
here $\varpi$  is the volume form determined by the dual to the 
canonical basis of $\Fg$.
This trace satisfies
\begin{equation} \label{it}
\tau (h(a)) \, = \,  \d(h)\, \tau(a) , \qquad h \in \Hc_n, \,  \, a \in \Ac\, .
\end{equation}

 The standard Hopf cyclic model for $\Hc_n$ is imported from the standard cyclic
model of the algebra $\Ac$, by means of the characteristic map
\begin{align} \label{char-map}
\begin{split} 
&h^1 \ot \ldots \ot h^q \in \Hc_n^{\ot^q}  \longmapsto
 \chi_{\tau} (h^1 \ot \ldots \ot h^q) \in C^q (\Ac)
\, , \\  
&\chi_{\tau} (h^1 \ot \ldots \ot h^q) (a^0 , \ldots , a^q) = 
 \tau (a^0  h^1 (a^1) \ldots h^q (a^q)) , \quad a^j \in \Ac ,
\end{split}
\end{align}
It gives rise to a cyclic structure~\cite{Cext} on 
$\, \{ C^q (\Hc_n ; \delta) := \Hc_n^{\ot^q} \}_{q \geq 0}$, with faces, degeneracies and cyclic
 operator given by
\begin{eqnarray*} 
\d_0 (h^1 \ot \ldots \ot h^{q-1}) &=& 1 \ot h^1
\ot \ldots \ot h^{q-1} , \\  
\d_j (h^1 \ot \ldots \ot h^{q-1}) &=& h^1 \ot \ldots \ot \D h^j \ot
\ldots \ot h^{q-1}, \quad 1 \leq j \leq q-1 , \\  
\d_n (h^1 \ot \ldots \ot h^{q-1}) &=& h^1 \ot \ldots \ot h^{q-1}
\ot 1; \\
 \s_i (h^1 \ot \ldots \ot h^{q+1}) &=& h^1 \ot \ldots \ot \ve
(h^{i+1}) \ot \ldots \ot h^{q+1} , \quad 0 \leq i \leq q \, ; \\  
   \tau_q (h^1 \ot \ldots \ot h^q) &=& S_\d (h^1) \cdot (h^2
\ot \ldots \ot h^q \ot 1) \, .
\end{eqnarray*}
The identity $\, \tau_q^{q+1} = \Id \,$ is satisfied precisely
 because of the involutive property \eqref{invant}, to which is actually equivalent.
\smallskip

The periodic Hopf cyclic cohomology $HP^\bullet (\Hc_n; \Cb_\d)$
of $\Hc_n$ with coefficients in the modular pair $(\d, 1)$ is, by definition
(cf. \cite{CM98, CM99}), the $\Zb_2$-graded cohomology of the total complex 
$\, CC^{\rm tot} (\Hc_n ; \Cb_\d)$ associated to
the bicomplex 
$\, \{C C^{*, *} (\Hc_n ;\Cb_\d), \, b , \, B \}$, where
 \begin{align*}  
b = \sum_{k=0}^{q+1} (-1)^k \d_k , \qquad
  B = ( \sum_{k=0}^q (-1)^{q \, k}\tau_q^k ) \sigma_{q-1}\tau_q \, .
\end{align*} 

The periodic Hopf cyclic cohomology of $\Hc_n$ relative to $\GL_n$,  denoted
$HP^\bullet (\Hc_n, \GL_n ; \Cb_\d)$, is the cohomology of the cyclic complex
defined as follows. One considers the quotient
 $\Qc_n:= \Hc_n \ot_{\Uc (\Fg \Fl_n)} \Cb \equiv \Hc_n /\Hc_n \Uc^+ (\Fg \Fl_n)$, which is
 an $\Hc_n$-module coalgebra with respect to the coproduct and counit
 inherited from $\Hc_n$. One then forms the cochain complex
 \begin{align*} 
C^q  (\Hc_n, \GL_n ; \Cb_\d) : = \, 
\Cb_\d \ot_{\Uc (\Fg \Fl_n)} \Qc_n^{\ot q} \equiv \left(\Qc_n^{\ot q}\right)^{\GL_n} ,
\qquad q \geq 0 ,
\end{align*} 
endowed with the cyclic structure given by restricting to $\GL_n$-invariants the operators
 \begin{eqnarray*}  
 \d_0 (c^1 \ot \ldots \ot c^{q-1}) &=& 
  \dot{1} \ot c^1 \ot \ldots \ot
  \ldots \ot c^{q-1} ,   \\  
\d_i  (c^1 \ot \ldots \ot c^{q-1})  &=& 
c^1 \ot \ldots \ot \D c^i \ot \ldots \ot c^{q-1} , \quad 1 \leq i \leq q-1; \\ 
\d_n  (c^1 \ot \ldots \ot c^{q-1})  &=& c^1 \ot \ldots \ot c^{q-1}
\ot \dot{1}\, ; \\  
 \s_i  (c^1 \ot \ldots \ot c^{q+1})  &=& c^1 \ot \ldots \ot \ve
(c^{i+1}) \ot \ldots \ot c^{q+1} ,\quad 0 \leq i \leq q \, ; \\   
   \tau_q ( \dot{h}^1 \ot c^2 \ot \ldots \ot c^q) &=& 
    {S_\d}(h^1) \cdot (c^2
   \ot \ldots \ot c^q \ot \dot{1}).
\end{eqnarray*}
 The corresponding characteristic map lands in the cyclic cohomology 
 of the crossed product algebra $\Ac_{\rm base} = C_c^\infty (\Rb^n) \rtimes \Gb$,
 and is given at the chain level by the map
 $c \in \left(\Qc_n^{\ot q}\right)^{\GL_n}   \mapsto
 \chi_{\rm base} (c) \in C^q (\Ac_{\rm base})$ 
 defined as follows: 
\begin{align} \label{rel-char-map}
\chi_{\rm base} (\dot{h}^1 \ot \ldots \ot \dot{h}^q) (a^0 , \ldots , a^q) = 
 \tau_{\rm base} (\td a^0  h^1 (\td a^1) \ldots h^q (\td a^q)) ,
\end{align}
where $ \tau_{\rm base}$ is the canonical trace of $ \Ac_{\rm base}$,
 $\dot{h}$ stands for the class in $\Qc_n$ of $h \in \Hc_n$, and for a
monomial $a = f U_\phi \in \Ac_{\rm base}$ we let
$\td a := \td f U_\phi \in \Ac$, with
$\td f  \in C^\infty (F\Rb^n)$ denoting the lift of $f \in C_c^\infty (\Rb^n)$ via
the natural projection $F\Rb^n \ra \Rb^n$. The definition 
makes sense, as it can be checked that the element
$\td a^0  h^1 (\td a^1) \ldots h^q (\td a^q) \in \Ac$ is independent of the representatives
$h^i$ of the classes $\dot h^i$, and
does descend to $\Ac_{\rm base}$.

\smallskip

  \subsection{From equivariant to Hopf cyclic cohomology}  \label{LtoH}

We recall the definition of the map $\Phi$ of Connes~\cite[III.2.$\d$]{book},
specialized to the present context.
Consider the DG-algebra,
$\, {\Bc_\Gb} (G)= \Om^*_c(G) \ot \wg \, \Cb [\Gb']$, where $\Gb' = \Gb \setminus \{e\}$,
with the differential $d \ot \Id$.
One labels the generators of $\Cb [\Gb']$ as
$\g_{\phi}$, $\phi \in \Gb$, with $\g_e = 0$, and one forms the crossed product
$\, \Cc_\Gb (G)  = {\Bc_\Gb}(G) \rtimes \Gb$, 
with the commutation rules
\begin{align*}
&U_{\phi}^\ast \, \om \, U_{\phi} = \phi^\ast \, \om  , &\qquad
\, \om \in \Om^*_c(G),\\
& U_{\phi_1}^\ast \, \g_{\phi_2} \, U_{\phi_1} =\g_{\phi_2 \circ \phi_1} -
\g_{\phi_1} , &\qquad  \phi_1 , \phi_2 \in \Gb \, .
\end{align*}
$\Cc_\Gb (G) $ is also a DG-algebra, equipped with the differential  
\begin{equation} \label{dbo}
{\dbo} (b \, U_{\phi}^\ast) = db \, U_{\phi}^\ast - (-1)^{\p b} \, b \, \g_{\phi} \,
U_{\phi}^\ast  , \qquad b \in {\Bc_\Gb} (G) , \quad \phi \in \Gb,
\end{equation}

A cochain $\lambda \in \bar{C}^{q}(\Gb, \Om^p(G))$ determines a linear form
$\wt{\lambda}$ on $\Cc_\Gb (G) $ as follows: 
\begin{align}   \label{prePhi}
\begin{split}
&\wt{\lambda} (b \,U_{\phi}^\ast) = 0 \qquad  \text{for} \quad  \phi \ne \one ; \\
& \text{if}  \quad \phi = \one \quad  \text{and} \quad
 b=\om \ot \g_{\rho_1} \ldots \g_{\rho_q} \qquad  \text{then} \\
&\wt{\lambda}(\om \ot \g_{\rho_1} \ldots \g_{\rho_q}) = \int_{G}
 \lambda(1, \rho_1 , \ldots ,\rho_q) \wg \om .
   \end{split}
\end{align}
The map $\Phi$ from $ \bar{C}^{\bullet}(\Gb, \Om^\bullet(G))$
to the $(b, B)$-complex of the algebra $\Ac = C_c^\ify (G)  \rtimes \Gb$ 
is now defined for  $\lambda \in \bar{C}^{q}(\Gb, \Om^p(G))$ by
\begin{align}   \label{mapPhi}
\begin{split}
\Phi(\lambda)(a^0, \ldots, a^m)&=
\frac{p!}{(m+1)!}
\sum_{j=0}^l(-1)^{j(m-j)}\wt{\lambda}({\dbo}a^{j+1}\cdots {\dbo}a^m\; a^0\; 
{\dbo}a^1\cdots {\dbo}a^j) \\
   \text{where} \quad m &=\dim G-p+q  , \qquad a^0, \ldots, a^m \in \Ac .
   \end{split}
\end{align}
By~\cite[III.2.$\d$, Thm. 14]{book}, $\Phi$ is a chain map to the total
 $(b, B)$-complex of the algebra $\Ac$.
 \smallskip

The relative version $\Phi^{\GL_n}$ of the map $\Phi$ is obtained by first
replacing $\Om^*_c(G) $ with the $\GL_n$-basic forms $\Om^*_{c, \rm basic}(G)$
which are compact modulo $\GL_n$, and so can be identified to $\Om^*_c(\Rb^n)$,
and then replacing in the definition \eqref{prePhi} the integration over 
$G=\Rb^n \ltimes \GL_n$ by integration over the base $\Rb^n$. One obtains this way
the induced chain map
\begin{align} \label{rel-chain-map}
\Phi^{\GL_n} : \bar{C}^{\bullet}(\Gb, \Om^\bullet(\Rb^n)) \ra  C^\bullet (\Ac_{\rm base}) .
 \end{align}
 
Assume now that $\lambda \in \bar{C}^{q}(\Gb, \Om^p(\Rb^n))$ is of the form 
$\lambda = \Dc_\nb  (\om)$ with  $\om \in C(\Fa_n, \GL_n)$,
 where $\nb$ stands for the standard flat connection. Using \cite[Lemma 3.5]{DHC}
 which identifies the map $\Dc_\nb$ with the map $\Dc$ employed 
 in~\cite{CM98}, one shows as in~\cite[pp. 233-234]{CM98}) that 
$\Phi^{\GL_n}(\lb)$ has the expression  
\begin{align}  \label{Phim}
 \Phi^{\GL_n}(\lambda)(a^0, \ldots, a^q) \, = \,
 \tau_{\rm base} (\td a^0  h^1 (\td a^1) \ldots h^q (\td a^q)) ,
\end{align}
 with $\, \sum_\a  \dot h_\a^1 \ot \ldots \ot \dot h_\a^q \in \left(\Qc_n^{\ot \, q}\right)^{\GL_n}$ 
uniquely determined by $\lb$. This means that $\Phi^{\GL_n}(\lb)$ lands in the 
$(b, B)$-complex which defines the Hopf cyclic cohomology of $ \Hc_n $ relative to
$\GL_n$.  
Thus, by restricting $\Phi^{\GL_n}$ to the subcomplex 
\begin{align}  \label{sups}
\bar{C}_{\Dc}^{\rm tot}(\Gb, \Om^*(\Rb^n)) :=\Dc_\nb \big(C(\Fa_n, \GL_n)\big) 
\subset \bar{C}_{\rm rd}^{\rm tot}(\Gb, \Om^*(\Rb^n)) ,
\end{align}
one obtains a chain map
\begin{align}  \label{ups}
 \Phi^{\GL_n}_{\rm rd}: \bar{C}_{\Dc}^{\rm tot}(\Gb, \Om^*(\Rb^n)) \ra 
 CC^{\rm tot} (\Hc_n, \GL_n ; \Cb_\d) .
 \end{align}
 By~\cite[Theorem 11]{CM98}, or more precisely its relative to $\GL_n$ version, the
 composition $\Phi^{\GL_n}_{\rm rd} \circ \Dc^{\GL_n}_\nb$ is  a quasi-isomorphism.
 Since, by construction (cf. \S \ref{ECC}) the cocycles $C_J (\hat{\Om}_\nb)$
 are images via the map $\Dc^{\GL_n}_\nb$ of representatives for a basis of 
 $H^*(\Fa_n, \GL_n)$,  we can finally conclude that:
 
 \begin{thm} \label{vE}
The collection of cocycles
 \begin{align*} 
  \{\Phi^{\GL_n}_{\rm rd}\big(C_J (\hat{\Om}_\nb)\big) \, ; \qquad
  J= (j_1 \leq \ldots \leq j_q),\quad  |J| \leq n  \}
  \end{align*}
 represent a basis of $HP^\bullet (\Hc_n, \GL_n ; \Cb_\d)$.  
 \end{thm}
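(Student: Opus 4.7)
The plan is to assemble the statement directly from the quasi-isomorphism chain built up in \S\ref{ExpCoc} and the relative-to-$\GL_n$ version of the comparison theorem from \cite{CM98}. Throughout $M = \Rb^n$ and $\nb$ denotes the standard flat connection.

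First, I would record that the monomials $c_J$ with $J = (j_1 \le \ldots \le j_q)$ and $|J| \le n$ form a linear basis of $H^\bullet(\hat W(\Fg\Fl_n, \GL_n)) \cong P_{2n}[c_1, \ldots, c_n]$. Via the Gelfand-Fuks inclusion quasi-isomorphism $\hat W(\Fg\Fl_n, \GL_n) \hookrightarrow C^\bullet(\Fa_n, \GL_n)$ recalled just above \eqref{UCW1}, these same $c_J$'s represent a basis of $H^\bullet(\Fa_n, \GL_n)$.

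Next, by the definition \eqref{basis-forms} together with the factorization $\Dc^{\GL_n}_\nb = \oint_{\D^\bullet} \Cc^{\GL_n}_\nb$ from Theorem \ref{main2}, one has
\begin{equation*}
C_J(\hat\Om_\nb) \, = \, \Dc^{\GL_n}_\nb(c_J) \, \in \, \bar C_{\Dc}^{\rm tot}(\Gb, \Om^\ast(\Rb^n)) .
\end{equation*}
By the quasi-isomorphism \eqref{UCW2} applied to the basis $\{c_J : |J|\le n\}$ of $H^\bullet(\Fa_n, \GL_n)$, the cocycles $C_J(\hat\Om_\nb)$ represent a basis of the cohomology of $\bar C_{\Dc}^{\rm tot}(\Gb, \Om^\ast(\Rb^n))$, which by \eqref{sups}-\eqref{ups} is exactly the source of the chain map $\Phi^{\GL_n}_{\rm rd}$.

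Finally, I would invoke the relative version of \cite[Theorem 11]{CM98} quoted in the paragraph preceding the theorem, which asserts that the composition $\Phi^{\GL_n}_{\rm rd} \circ \Dc^{\GL_n}_\nb : C^\bullet(\Fa_n, \GL_n) \to CC^{\rm tot}(\Hc_n, \GL_n; \Cb_\d)$ is a quasi-isomorphism. Applied to the $c_J$'s, it therefore carries their classes to a linear basis of $HP^\bullet(\Hc_n, \GL_n; \Cb_\d)$; and by the identity displayed above, this image is exactly $\Phi^{\GL_n}_{\rm rd}(C_J(\hat\Om_\nb))$, which yields the claim. The only genuinely non-formal input is the quasi-isomorphism property of $\Phi^{\GL_n}_{\rm rd} \circ \Dc^{\GL_n}_\nb$, which encodes the bulk of the comparison between the geometric cocycles \eqref{Phim} and the Hopf-cyclic characteristic map \eqref{rel-char-map}; this is the step I expect to be the main obstacle in a self-contained presentation, but here it is imported as a black box from \cite{CM98}, and the role of the present paper is to exhibit the concrete Chern cocycles $C_J(\hat\Om_\nb)$ that are transported under this comparison.
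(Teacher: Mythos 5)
Your proposal is correct and follows essentially the same route as the paper: the paper's entire argument consists of noting that the $C_J(\hat{\Om}_\nb)$ are the images under $\Dc^{\GL_n}_\nb$ of representatives of a basis of $H^\bullet(\Fa_n,\GL_n)$, and then invoking the relative-to-$\GL_n$ version of \cite[Theorem 11]{CM98} to conclude that the composition $\Phi^{\GL_n}_{\rm rd}\circ\Dc^{\GL_n}_\nb$ is a quasi-isomorphism. Your intermediate remark that the $C_J(\hat{\Om}_\nb)$ form a basis of the cohomology of $\bar{C}_{\Dc}^{\rm tot}(\Gb,\Om^\ast(\Rb^n))$ is not literally what \eqref{UCW2} gives (it concerns the larger complex $\bar{C}_{\rm rd}^{\rm tot}$), but this step is not needed since your final argument correctly applies the composed quasi-isomorphism directly to the $c_J$.
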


 To get more insight into the makeup of these cocycles, we recall that
 $\nb$ is the flat connection on
$G \equiv F\Rb^n \ra {\Rb}^n$, so its
 connection form  is\, $\om_\nb = \left(\om^i_j \right)$ with
$\,  {\om}^i_j \, :=  \, ({\bf y}^{-1})^i_{\mu} \, d{\bf y}^{\mu}_j = \big({\bf y}^{-1} \, d{\bf y}\big)^i_j$,
 \, $i, j =1, \ldots , n$.
 With the usual summation convention, for any $\, \phi \in \Gb$,
\begin{align*}  
\phi^* ({\om}^i_j ) \, = \, {\om}^i_j + \g_{jk}^i (\phi) \,{\t}^k  
\, = \,{\om}^i_j +  \left( {\bf y}^{-1} \cdot
{\phi}^{\prime} (x)^{-1} \cdot \part_{\mu} {\phi}^{\prime} (x) \cdot
{\bf y}\right)^i_j \, dx^\mu ,
\end{align*}
since
\begin{equation*}  
 \g^i_{j \, k} (\phi) (x, {\bf y}) = \left( {\bf y}^{-1} \cdot
{\phi}^{\prime} (x)^{-1} \cdot \part_{\mu} {\phi}^{\prime} (x) \cdot
{\bf y}\right)^i_j \, {\bf y}^{\mu}_k \quad \text{and} \quad \t^k =
 \left({\bf y}^{-1}\right)^k_\ell dx^\ell .
\end{equation*}
Thus, denoting 
\begin{align} \label{Jac}
 \tilde\G _\mu (\phi)(x, {\bf y}) = 
{\bf y}^{-1} \cdot {\phi}^{\prime} (x)^{-1} \cdot \part_{\mu} {\phi}^{\prime} (x) \cdot {\bf y},
\end{align}
one has \,
$ \phi^* (\om_\nb) \, = \, \om_\nb \, + \,  \tilde\G _\mu (\phi) \, dx^\mu$.
Therefore, the simplicial connection is
\begin{align*}  
 \hat{\om}_\nb (\tb ; \phi_0, \ldots , \phi_p)=
 \sum_{r=0}^p t_r \phi_r^* (\om_\nb) 
 =  \om_\nb + \sum_{r=0}^p t_r \,  \tilde\G _k (\phi_r) \, dx^k . 
\end{align*}
Since $\, \phi^*(\Om_\nb ) =0$, the simplicial curvature  \eqref{scurv} 
takes the form
\begin{align*} 
 \hat{\Om}_\nb (\tb ; \phi_0, \ldots , \phi_p) &= 
 \sum_{r=0}^p dt_r \wdg  \phi_r^* (\om_\nb) 
 - \sum_{r=0}^p t_r \,
\phi_r^* (\om_\nb) \wdg \phi_r^* (\om_\nb) \\ \notag
&+ \sum_{r, s=0}^p t_r t_s \,  \phi_r^* (\om_\nb) \wdg  \phi_s^* (\om_\nb) .
 \end{align*}
 Furthermore, being given by invariant polynomials, 
 the Chern cocycles \eqref{chern-forms} are built out of the pull-back of the curvature
form by the cross-section $x \in \Rb^n \mapsto (x, {\bf 1})\in \Rb^n \times\GL_n$.
The latter is given by the matrix-valued form
\begin{align*} 
 \hat{R}&(\tb ; \phi_0, \ldots , \phi_p) \, = 
 \sum_{r=0}^p dt_r \wdg  \G (\phi_r)
 - \sum_{r=0}^p t_r\,
\G(\phi_r)\wdg \G (\phi_r)  \\ 
&+ \sum_{r, s=0}^p t_r t_s \, \G (\phi_r)\wdg \G(\phi_s)  ,  \qquad \text{where} \quad
\G (\phi):= ({\phi}^{\prime})^{-1} \cdot d{\phi}^{\prime} ,
 \end{align*}
with
$\phi^{\prime} = \left(\p_j\phi^i \right)$ denoting the Jacobian matrix of  $\phi \in \Gb$.
 This ensures that the diffeomorphisms $\phi \in \Gb$ appear in 
all the basic cocycles \eqref{basis-forms}
solely through the matrix-valued $1$-forms $\G (\phi) \in \Om^1 (\Rb^n) \ot \Fg\Fl_n$. 
For example, the Chern cocycle  $C_q(\hat\Om_\nb)$  has components 
  \begin{align*}  
&C_q^{(p)}(\hat\Om_\nb)(\phi_0,\ldots, \phi_p) \, = \,   \\
 & = (-1)^p \sum_{1\le i_1 < \ldots < i_q \le n} \sum_{\mu \in S_q} 
 (-1)^\mu \int_{\D^p}R^{i_1}_{\mu(i_1)}\wdg \cdots \wdg R^{i_q}_{\mu(i_q)} 
 (\tb ; \phi_0, \ldots , \phi_p) .
 \end{align*}
In particular, up to a constant factor \, $C_q^{(q)}(\hat\Om_\nb)(\phi_0,\ldots, \phi_q)$
equals
  \begin{align*}  
 \sum_{\s \in S_{q+1}}
  (-1)^\s \, \Tr\left(\G (\phi_{\s(1)}) \wdg \cdots \wdg \G (\phi_{\s(q)})\right) ,
\end{align*}
 where $\s$ runs through the permutations of $\{0, 1, \ldots , q\}$.
 
It is thus seen that every cohomology class in $HP^\bullet (\Hc_n , \GL_n ; \Cb_\d)$
 can be represented by cocycles
$c \in \sum_{q \geq 0} \left(\Qc_n^{\ot^q}\right)^{\GL_n} $ 
whose characteristic image  
 $\chi_{\rm base} (c) \in \sum_{q \geq 0} C^q \left(C_c^{\ify} (\Rb^n ) \rtimes \Gb\right)$ 
  involves only the jet of order $2$. 
 
The above property can be stated more intrinsically,  
in terms of the standard Hopf cyclic complex. Let $\Fc^\d_n$
 denote the subalgebra of $\Hc_n$ generated by the multiplication operators
 $\d_{jk}^i $ of \eqref{gijk} and set
   \begin{align*}
 \Xc_n := \, \Fc^\d_n\, + \,  \sum_{k=1}^n \Fc^\d_n \cdot X_k ;
 \end{align*}
it is a $\GL_n$-invariant subspace of $\Hc_n$, and we let $\dot{\Xc}_n $ be its image 
in $ \Qc_n$.
  
 \begin{cor} \label{1gen}
Every cohomology class in $HP^\bullet (\Hc_n , \GL_n ; \Cb_\d)$
can be represented by cocycles formed of elements in
$\sum_{q \geq 0} \left(\dot\Xc_n^{\ot^q}\right)^{\GL_n} $. 
 \end{cor}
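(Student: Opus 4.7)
The plan is to identify explicitly the tensor structure of the basis cocycles $\Phi^{\GL_n}_{\rm rd}\big(C_J(\hat\Om_\nb)\big)$ produced by Theorem~\ref{vE} and to verify that each of their tensor slots already lies in $\dot\Xc_n$. Since by that theorem these cocycles represent a linear basis of $HP^\bullet(\Hc_n,\GL_n;\Cb_\d)$, the conclusion for an arbitrary cohomology class follows by linearity.
\smallskip

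First I would record the input to $\Phi^{\GL_n}_{\rm rd}$. The computation displayed just before the corollary shows that, after pull-back to the base via the cross-section $x\mapsto (x,{\bf 1})$, the simplicial curvature $\hat R(\tb;\phi_0,\ldots,\phi_p)$ is a polynomial expression in the simplicial variables $t_r, dt_r$ and in the $\Fg\Fl_n$-valued $1$-forms $\G(\phi_r)=(\phi_r')^{-1}d\phi_r'$. In view of the identity $\G(\phi)^i_j=\g^i_{jk}(\phi)\vert_{y={\bf 1}}\,dx^k$ read off from \eqref{gijk}, the entries of $\hat R$ involve the generating functions $\g^i_{jk}(\phi_r)$ themselves but none of their $X_\ell$-derivatives. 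Hence every component $C_J^{(p)}(\hat\Om_\nb)(\phi_0,\ldots,\phi_p)\in\Om^{2|J|-p}(\Rb^n)$ is, after integration over $\D^p$, a polynomial in the functions $\g^i_{jk}(\phi_r)$ wedged with monomials $dx^{k_1}\wdg\cdots\wdg dx^{k_s}$.
\smallskip

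Next I would feed such $\lb=C_J(\hat\Om_\nb)$ into the formula \eqref{mapPhi}--\eqref{Phim} for $\Phi^{\GL_n}$ and track the resulting Hopf-cyclic cochain. Under the chain-level identification recalled in~\eqref{Phim}, every factor $\g^i_{jk}(\phi_r)$ appearing in $\lb$ is promoted to the multiplication generator $\d^i_{jk}\in\Fc^\d_n$, whereas each $dx^k$ consumed from a differentiated slot $\dbo\,\td a^\ell$ is converted into one application of the horizontal field $X_k$ to that specific $\td a^\ell$. Because each $\dbo\,\td a^\ell$ contributes either the single $1$-form $d\td a^\ell$ (one horizontal derivative) or the $\g_{\phi_\ell}$-term (no derivative, feeding the group-cochain slot), every tensor factor of the resulting cocycle $c_J\in (\Qc_n^{\ot q})^{\GL_n}$ is a product of $\d^i_{jk}$-generators followed by at most one $X_k$. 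This places each tensor slot inside $\Xc_n$, hence its class in $\Qc_n$ inside $\dot\Xc_n$.
\smallskip

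The main obstacle is the bookkeeping that pins down the ``at most one $X_k$ per slot'' statement: $\dbo$ interacts with the crossed product only up to the $\g_\phi$-correction, and the expansion $d\td a^\ell=X_k(\td a^\ell)\theta^k+Y^j_i(\td a^\ell)\om^i_j$ mixes horizontal and vertical components. The required cancellations, however, follow the same pattern as in the proof of \cite[Theorem~11]{CM98} and its $\GL_n$-relative version recalled in the previous subsection: the $\GL_n$-basic nature of $C_J(\hat\Om_\nb)$ forces the vertical $\om^i_j$-components to drop out after pairing, leaving at most a single $\theta^k$ (hence a single $X_k$) per differentiated slot. Once this verification is carried out, the corollary follows at once.
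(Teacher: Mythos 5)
Your proposal is correct and follows essentially the same route as the paper: the displayed computation of $\hat R$ shows the multiplication operators entering are exactly the $\d^i_{jk}$'s, and the single $X_k$ per slot comes from the first summand of $\dbo$ via $df=\sum_k X_k(f)\,dx^k$. Your worry about vertical components is actually moot, since $\Phi^{\GL_n}$ acts on $\Ac_{\rm base}=C_c^\infty(\Rb^n)\rtimes\Gb$ where the lifted functions are fiberwise constant, so $d\td a$ has no $\om^i_j$-component to begin with.
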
 

\begin{proof}
 The horizontal operators appear because of the first summand in the definition
  \eqref{dbo} of the differential $\dbo$,
 which contributes to the  formula \eqref{Phim} as follows:
when applied to monomials
$a = f \, U^*_\phi \in C_c^{\ify} (\Rb^n )$, it brings in the forms
$\, df  = \sum_{k=1}^n X_k (f) \, dx^k$.
 \end{proof}
 
 Explicit representatives for the Hopf cyclic Chern classes can also be
given in the cohomological models of Chevalley-Eilenberg type constructed
in~\cite{MR09, MR11}, by transporting the equivariant Chern classes from
the Bott complex as in \cite[\S 3]{DHC}, 
via the partial inverse of the map $\Theta$ therein defined, only 
this time restricted to $\GL_n$-basic forms. 
 
 %%%%%%%%%%%%%%%%%%%%%%%%%%%%%%%%%% 

%%%%%%%%%%%%%%%%%%%%%%%%%%%%%%%%%
 \end{document}